\newcommand{\md}{{\rm d}}
\newtheorem{thm}{Theorem}%[section]
\newtheorem{lem}[thm]{Lemma}
\newtheorem{rem}[thm]{Remark}
\newcommand{\R}{\mathbb R}
\newcommand{\C}{\mathbb C}
\title[Ginzburg-Landau equations]{A non-existence result for the Ginzburg-Landau equations}
\author[A. Kachmar]{Ayman Kachmar}
\author[M. Persson]{Mikael Persson}
\address{A. Kachmar \& M. Persson\newline
Aarhus University\\ Department of Mathematical Sciences \\Ny
Munkegade 8000 Aarhus C}
\email{akachmar@imf.au.dk}
\email{mickep@imf.au.dk}
\begin{document}
\begin{abstract}
We consider the stationary Ginzburg-Landau equations in $\R^d$, $d=2,3$. We
exhibit a class of   applied magnetic fields (including constant
fields) such that the Ginzburg-Landau equations do not admit finite
energy solutions.
\par \vspace*{10pt} \noindent {\sc R\'esum\'e.} {\bf Un r\'esultat
de non-existence pour les \'equations de Ginzburg-Landau.} Nous
consid\'erons les \'equations de Ginzburg-Landau dans $\R^d$,
$d=2,3$. Nous exhibons une classe de champs magn\'etiques
appliqu\'es telle que les \'equations de Ginzburg-Landau n'admettent
pas de solution d'\'energie finie.
\end{abstract}

\maketitle

\section{Introduction}
The aim of the present note is to study the Ginzburg-Landau system
of equations in $\R^2$,
\begin{equation}\label{GL-sys}
\left\{
\begin{array}{l}
-(\nabla-iA)^2\psi=(1-|\psi|^2)\psi\,,\\
-\nabla^\bot\left({\rm curl}\,A- H\right)={\rm
Im}\left(\psi\,\overline{(\nabla-iA)\psi}\right)\,.
\end{array}\right.
\end{equation} Here $\psi\in H^1_{\rm loc}(\R^2;\mathbb C)$ is
the complex order parameter, $A\in H^1_{\rm loc}(\R^2;\R^2)$ is the
magnetic vector potential, ${\rm curl}\,A$ is the induced magnetic
field
\begin{equation}\label{eq:mf}
B={\rm curl}\,A=
\partial_{x_1}A_2-\partial_{x_1}A_1\,,\end{equation} $H\in L_{\rm
loc}^2(\R^2)$ is the applied magnetic field, and
$\nabla^\bot=(-\partial_{x_2},\partial_{x_1})$ is the Hodge
gradient.

Solutions of \eqref{GL-sys} are of particular interest in the
physics literature  as  they do include periodic solutions with
vortices distributed in a uniform lattice, named as Abrikosov's
solution. We refer the reader to \cite{Ab} for the physical
motivation and to \cite{AS, D} for mathematical results in that
direction.

The equations \eqref{GL-sys} are formally the Euler-Lagrange
equations of the following Ginzburg-Landau energy,
\begin{equation}\label{eq-en-GL}
\mathcal
G(\psi,A)=\int_{\R^2}\Bigl(|(\nabla-iA)\psi|^2+\frac12\bigl(1-|\psi|^2\bigr)^2
+|{\rm curl}\,A-H|^2\Bigr)\,\md x\,.
\end{equation}
A solution $(\psi,A)$ of \eqref{GL-sys} is said to have finite
energy if $\mathcal G(\psi,A)<\infty$.  When the applied magnetic
field $H\in L^2(\R^2)$, it is proved in \cite{GS, Y1} that the
system \eqref{GL-sys} admits finite energy solutions. In the present
note, we would like to discuss the optimality of the hypothesis
$H\in L^2(\R^2)$ thereby establishing negative results when this
hypothesis is violated.

Our result is that if $H$ is not allowed to decay  fast at infinity
(especially if it is constant), then there are no finite energy
solutions to~\eqref{GL-sys}.

\begin{thm}\label{cor:GL-sys}
Let $\alpha<1$. Assume that the applied magnetic field
$H\in L^2_{\rm loc}(\R^2)$ and that there exist constants $R_0>0$ and $h>0$
such that
\begin{equation*}
H(x)\geq \frac{h}{|x|^\alpha}\quad {\rm for ~all~}x {\rm ~with~}|x|>R_0\,.
\end{equation*}
 Then the Ginzburg-Landau system \eqref{GL-sys} does not admit
finite energy solutions.
\end{thm}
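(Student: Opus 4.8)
\medskip
\noindent\textbf{Sketch of proof.} The plan is to argue by contradiction: assuming $(\psi,A)$ is a finite--energy solution, I would exhibit a single quantity which the lower bound on $H$ forces to be of size $\gtrsim R^{2-\alpha}$ on the balls $B_R$ (centred at the origin), whereas finiteness of $\mathcal G(\psi,A)$ forces it to be $\lesssim R$; since $\alpha<1$ this is absurd.

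First I would record the standard a priori facts. By elliptic regularity $\psi$ is continuous, and the Kato (diamagnetic) inequality applied to the first equation gives $\Delta|\psi|\ge(|\psi|^2-1)|\psi|$ distributionally; hence $(|\psi|-1)_+$ is a nonnegative subharmonic function on $\R^2$, it belongs to $L^2(\R^2)$ because $\int_{\R^2}(1-|\psi|^2)^2<\infty$, and the mean value inequality together with this integrability forces $(|\psi|-1)_+\equiv0$, i.e. $|\psi|\le1$. In particular the supercurrent $j:={\rm Im}\bigl(\psi\,\overline{(\nabla-iA)\psi}\bigr)$ obeys $|j|\le|(\nabla-iA)\psi|$, so $j\in L^2(\R^2)$ with $\|j\|_{L^2}^2\le\mathcal G(\psi,A)$, and finiteness of the third term of $\mathcal G$ gives $B-H\in L^2(\R^2)$, where $B={\rm curl}\,A$. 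I would also prove the uniform estimate $\int_{B_R}(1-|\psi|^2)\,\md x\le K$ for every $R>0$, with $K=K(\mathcal G)$: testing the first equation against $\overline\psi\,\chi_R^2$, where $\chi_R\equiv1$ on $B_R$, ${\rm supp}\,\chi_R\subset B_{2R}$ and $|\nabla\chi_R|\le C/R$, the only delicate term is $\tfrac12\int\nabla(\chi_R^2)\cdot\nabla|\psi|^2=-\tfrac12\int\Delta(\chi_R^2)\,|\psi|^2$, which is $O(1)$ uniformly in $R$ because $|\psi|\le1$ and $|\Delta(\chi_R^2)|\lesssim R^{-2}$ is supported in an annulus of area $\sim R^2$; together with $\int\chi_R^2|(\nabla-iA)\psi|^2\le\mathcal G$ and $1-|\psi|^2=(1-|\psi|^2)|\psi|^2+(1-|\psi|^2)^2$ this yields the claim.

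Next I would exploit a Bogomol'nyi--type identity. With $D_j=\partial_j-iA_j$, a direct computation gives the pointwise identity ${\rm curl}\,j=|D\psi|^2-|D_1\psi+iD_2\psi|^2-B|\psi|^2$; integrating over $B_R$ (for a.e.\ $R$) and applying Stokes' theorem,
$$\int_{B_R}|(\nabla-iA)\psi|^2\,\md x=\int_{B_R}|D_1\psi+iD_2\psi|^2\,\md x+\int_{B_R}B|\psi|^2\,\md x+\oint_{\partial B_R}j\cdot\tau\,\md\ell.$$
Since $j\in L^2(\R^2)$, the coarea formula provides a sequence $R_n\uparrow\infty$ with $R_n\int_{\partial B_{R_n}}|j|^2\,\md\ell\to0$, hence with $\oint_{\partial B_{R_n}}j\cdot\tau\,\md\ell\to0$; dropping the nonnegative first term on the right then gives $\int_{B_{R_n}}B|\psi|^2\,\md x\le\mathcal G+o(1)$. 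Splitting $B=H+(B-H)$ on the annulus $\Omega_{R_n}:=B_{R_n}\setminus B_{R_0}$, using $\||\psi|^2\|_{L^2(B_{R_n})}\le\sqrt\pi\,R_n$ (as $|\psi|\le1$), $\|B-H\|_{L^2}\le\sqrt{\mathcal G}$, and that the fixed ball $B_{R_0}$ contributes $O(1)$, I would obtain
$$\int_{\Omega_{R_n}}H\,|\psi|^2\,\md x\le C\,R_n$$
for a constant $C$ depending only on $\mathcal G$ and $R_0$.

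Finally I would compare this with the hypothesis: on $\Omega_{R_n}$ we have $H\ge h|x|^{-\alpha}>0$, so, taking $0<\alpha<1$ (the case $\alpha\le0$ being analogous and easier, since then $H$ is bounded below by a positive constant at infinity),
$$\int_{\Omega_{R_n}}H|\psi|^2\,\md x\ge h\int_{\Omega_{R_n}}|x|^{-\alpha}\,\md x-h\int_{\Omega_{R_n}}|x|^{-\alpha}(1-|\psi|^2)\,\md x\ge\frac{2\pi h}{2-\alpha}\,R_n^{2-\alpha}-C',$$
where I used $\int_{\Omega_{R_n}}|x|^{-\alpha}\,\md x=\frac{2\pi}{2-\alpha}(R_n^{2-\alpha}-R_0^{2-\alpha})$ together with $|x|^{-\alpha}\le R_0^{-\alpha}$ on $\Omega_{R_n}$ and the uniform bound $\int_{\Omega_{R_n}}(1-|\psi|^2)\le K$. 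Comparing the two displayed estimates gives $\frac{2\pi h}{2-\alpha}R_n^{2-\alpha}\le CR_n+C'$, so $R_n^{1-\alpha}$ stays bounded as $n\to\infty$, which is impossible since $\alpha<1$. This contradiction shows that no finite--energy solution exists. The main obstacle is the bookkeeping of the two boundary terms --- dealt with by the annular cutoff in the first step and by passing to a subsequence $R_n$ along which the current flux through $\partial B_{R_n}$ is negligible --- while the uniform bound $\int_{B_R}(1-|\psi|^2)\le K$ is the key non-obvious ingredient, as it is exactly what converts the lower bound on $\int_{\Omega_{R_n}}|x|^{-\alpha}$ into the required lower bound on $\int_{\Omega_{R_n}}|x|^{-\alpha}|\psi|^2$.
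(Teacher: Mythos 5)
Your proposal is correct, and it follows the same overall strategy as the paper --- squeezing $\int_{\Omega_R}H|\psi|^2\,\md x$ between a lower bound of order $R^{2-\alpha}$ (from the hypothesis on $H$ and $B-H\in L^2$) and an upper bound of order $R$ (from the finiteness of the kinetic energy and $|\psi|\le1$) --- but the technical implementation differs at both key steps. Where the paper localizes with a cutoff $\chi_R$ and invokes the known inequality $\int|(\nabla-iA)\phi|^2\ge\pm\int B|\phi|^2$ for $\phi\in H^1_0(B(0,R))$ (Lemma~\ref{lem:sh-op}), paying a factor $\tfrac12$ and an $O(1)$ commutator error, you integrate the underlying Bogomol'nyi identity ${\rm curl}\,j=|D\psi|^2-|D_1\psi+iD_2\psi|^2-B|\psi|^2$ directly and dispose of the boundary circulation by selecting radii $R_n$ along which $R_n\int_{\partial B_{R_n}}|j|^2\to0$; this is sharper (no factor $\tfrac12$) at the cost of the a.e.-$R$ Stokes argument, which you should justify with a word on regularity of weak solutions. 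Your derivation of $|\psi|\le1$ via Kato's inequality and subharmonicity of $(|\psi|-1)_+$ is exactly the content of the paper's Lemma~\ref{lem:GL}, which is simply cited from Yang. Finally, for the term $\int_{\Omega}|x|^{-\alpha}(1-|\psi|^2)$ the paper just applies Cauchy--Schwarz with $\|1-|\psi|^2\|_{L^2}\le(2\mathcal G)^{1/2}$ and $\||x|^{-\alpha}\|_{L^2(\Omega_{R})}\lesssim R^{1-\alpha}$, which already suffices since $1-\alpha<2-\alpha$; your additional uniform bound $\int_{B_R}(1-|\psi|^2)\,\md x\le K$, obtained by testing the first equation against $\overline\psi\chi_R^2$, is correct and yields a cleaner $O(1)$ error, but it is not needed. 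One minor advantage of the paper's cutoff route is that it transfers verbatim to the three-dimensional Theorem~\ref{cor:GL-sys3}, whereas your planar Stokes argument would need the corresponding $\R^3$ adaptation.
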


\begin{rem}
We note that $\frac1{|x|^\alpha}\in L^2(\R^2\setminus B(0,1))$ if and only if
$\alpha>1$, which
means that the result in Theorem~\ref{cor:GL-sys} is really complementary
to the results in~\cite{GS, Y1}.
\end{rem}

\begin{rem}\label{rem:alternative}
The same non-existing result still holds if we instead impose the following 
properties on $H$: $(1)$ $H\not\in L^2(\R^2)$, $(2)$ There exists $R_0>0$ 
such that for $H(x)$ is 
positive for $|x|>R_0$, and $(3)$ there exists $R_1>0$ such that the 
reverse H\"older-inequality
\begin{equation}\label{eq:alternative}
\int_{B(0,R)} H(x) \,\md x \geq |B(0,R)|^{1/2} \biggl(\int_{B(0,R)} H(x)^2 \,\md x\biggr)^{1/2}
\end{equation}
holds for all $R>R_1$. The proof follows the proof of Theorem~\ref{cor:GL-sys}
until the end, where the alternative properties of $H$ are used.
\end{rem}

We conclude by mentioning an immediate generalization to the 3-dimensional
equations. Let $\mathbf H=(H_1,H_2,H_3)\in L^2_{\rm loc}(\R^3;\R^3)$
be a given vector field. Consider the Ginzburg-Landau equations in $\R^3$,
\begin{equation}\label{GL-3}
\left\{
\begin{array}{l}
-(\nabla-iA)^2\psi=(1-|\psi|^2)\psi\,,\\
-{\rm curl}\left({\rm curl}\,A- H\right)={\rm
Im}\bigl(\psi\,\overline{(\nabla-iA)\psi}\bigr)\,.
\end{array}\right.
\end{equation}
A solution $(\psi,A)\in H^1_{\rm loc}(\R^3;\C)\times H^1_{\rm loc}(\R^3;R^3)$ 
is said to have finite energy if
\begin{equation*}
\mathcal E(\psi,A)=
\int_{\R^3}\Bigl(|(\nabla-iA)\psi|^2+\frac12(1-|\psi|^2)^2+|{\rm
curl}\,A-H|^2\Bigr)\,\md x<\infty\,.
\end{equation*}
We have then a similar result to Theorem~\ref{cor:GL-sys}.
\begin{thm}\label{cor:GL-sys3}
Let $\alpha<\frac32$. Assume that there exist $h>0$ and $R_0>0$ such that
 the applied magnetic field
$\mathbf H=(H_1,H_2,H_3)\in L^{2}_{\rm loc}(\R^3;\R^3)$ satisfies,
\begin{equation*}
H_3(x)\geq \frac{h}{|x|^\alpha}\quad\forall~x {\rm ~such~that~}\ |x|\geq R_0\,.
\end{equation*}
Then the Ginzburg-Landau system \eqref{GL-3} does not admit finite energy solutions.
\end{thm}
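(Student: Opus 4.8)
The plan is to carry out, in $\R^3$, the same argument as for Theorem~\ref{cor:GL-sys}, the two new ingredients being that one integrates over Euclidean balls $B(0,R)=\{x\in\R^3:|x|<R\}$ and that one applies a \emph{two-dimensional} Bogomolny identity in each horizontal slice $\{x_3=\mathrm{const}\}$, so that only the component $H_3$ of $\mathbf H$ enters. Suppose, for contradiction, that $(\psi,A)$ is a finite energy solution of \eqref{GL-3}. As in the planar case, after fixing a Coulomb gauge the solution is smooth, and since $1-|\psi|^2\in L^2(\R^3)$, the identity $\tfrac12\Delta|\psi|^2=|(\nabla-iA)\psi|^2+|\psi|^2(|\psi|^2-1)$ shows that $(|\psi|^2-1)_+$ is a nonnegative subharmonic function lying in $L^2(\R^3)$, hence $\equiv0$; thus $|\psi|\le1$ a.e. Writing $D_k=\partial_k-iA_k$, $B_3=\partial_1A_2-\partial_2A_1$ and $J=\bigl(\mathrm{Im}(\bar\psi D_1\psi),\,\mathrm{Im}(\bar\psi D_2\psi)\bigr)$ (the horizontal supercurrent), a direct computation gives the Bogomolny identity in each slice,
\begin{equation*}
|D_1\psi|^2+|D_2\psi|^2=|D_1\psi-iD_2\psi|^2+B_3\,|\psi|^2+\partial_1J_2-\partial_2J_1,
\end{equation*}
so, discarding $|D_3\psi|^2\ge0$, one gets the pointwise inequality $|(\nabla-iA)\psi|^2\ge B_3\,|\psi|^2+\partial_1J_2-\partial_2J_1$ on $\R^3$.

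I would then integrate this over $B(0,R)$. By Fubini and the planar divergence theorem applied slice by slice, $\int_{B(0,R)}(\partial_1J_2-\partial_2J_1)\,\md x$ is a boundary integral over $\partial B(0,R)$ whose modulus is at most $\int_{\partial B(0,R)}|J|\,\md S\le\int_{\partial B(0,R)}|(\nabla-iA)\psi|\,\md S$, using $|\psi|\le1$. Since $R\mapsto\int_{\partial B(0,R)}|(\nabla-iA)\psi|^2\,\md S$ is integrable on $(0,\infty)$, one may choose $R_n\to\infty$ with $R_n\int_{\partial B(0,R_n)}|(\nabla-iA)\psi|^2\,\md S\to0$, and Cauchy--Schwarz then bounds the boundary term by $C\bigl(R_n^2\int_{\partial B(0,R_n)}|(\nabla-iA)\psi|^2\,\md S\bigr)^{1/2}=o\bigl(R_n^{1/2}\bigr)$. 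As $\mathcal E(\psi,A)<\infty$, this yields
\begin{equation*}
\int_{B(0,R_n)}B_3\,|\psi|^2\,\md x\le\mathcal E(\psi,A)+o\bigl(R_n^{1/2}\bigr).
\end{equation*}

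Next, splitting $B_3|\psi|^2=(B_3-H_3)|\psi|^2+H_3|\psi|^2$ and using $B_3-H_3\in L^2(\R^3)$ with $|\psi|\le1$, the cross term is $O\bigl(|B(0,R_n)|^{1/2}\bigr)=O\bigl(R_n^{3/2}\bigr)$, so $\int_{B(0,R_n)}H_3|\psi|^2\,\md x\le\mathcal E(\psi,A)+CR_n^{3/2}+o(R_n^{1/2})$. On the other hand, using $H_3(x)\ge h|x|^{-\alpha}$ for $|x|>R_0$, writing $|\psi|^2=1-(1-|\psi|^2)$, and bounding $\int_{R_0<|x|<R_n}|x|^{-\alpha}(1-|\psi|^2)\,\md x\le\|1-|\psi|^2\|_{L^2}\bigl(\int_{R_0<|x|<R_n}|x|^{-2\alpha}\,\md x\bigr)^{1/2}$ with $\int_{R_0<|x|<R_n}|x|^{-2\alpha}\,\md x=O(R_n^{3-2\alpha})$ (the exponent $2-2\alpha$ of $r$ being $>-1$ precisely because $\alpha<\tfrac32$), one obtains
\begin{equation*}
\int_{B(0,R_n)}H_3\,|\psi|^2\,\md x\ge c\,R_n^{3-\alpha}-C'R_n^{3/2-\alpha}-C'',\qquad c>0.
\end{equation*}
Comparing this with the preceding bound and using $3-\alpha>\tfrac32$ (i.e. $\alpha<\tfrac32$) gives a contradiction for $n$ large, which proves the theorem. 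The same scheme yields the three-dimensional form of Remark~\ref{rem:alternative}: the argument is unchanged up to the last step, where the final lower bound is instead derived from $H_3\notin L^2(\R^3)$, positivity of $H_3$ at infinity, and a reverse Hölder inequality on the balls $B(0,R)\subset\R^3$.

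The main obstacle I anticipate is the treatment of the boundary term produced by the Bogomolny identity. Finiteness of the energy only controls $\int_{\R^3}|(\nabla-iA)\psi|^2$ and gives no pointwise or slice-wise decay of $(\nabla-iA)\psi$, so the surface integral has to be made small not in absolute value but \emph{relative to} the two competing powers in play: $R^{3/2}$, from $\int(B_3-H_3)|\psi|^2$, and $R^{3-\alpha}$, from the lower bound on $\int H_3|\psi|^2$. This bookkeeping---$o(R^{1/2})$ against $R^{3/2}$ against $R^{3-\alpha}$ on balls of $\R^3$, as opposed to $o(1)$ against $R$ against $R^{2-\alpha}$ on balls of $\R^2$---is what both fixes the threshold at $\alpha<\tfrac32$ and shows that Euclidean balls, rather than cylinders or slabs, are the right domains of integration.
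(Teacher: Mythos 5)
Your argument is correct and reaches the right threshold $\alpha<\frac32$, but it implements the key localization step differently from the paper. The paper proves Theorem~\ref{cor:GL-sys3} by repeating the two-dimensional proof verbatim: one drops $|D_3\psi|^2$, applies the analogue of Lemma~\ref{lem:sh-op} with $B$ replaced by $B_3=\partial_{x_1}A_2-\partial_{x_2}A_1$, and the cut-off $\chi_R$ places $\chi_R\psi$ in $H^1_0(B(0,R))$, so the inequality $\int|(\nabla-iA)\phi|^2\,\md x\geq\pm\int B_3|\phi|^2\,\md x$ applies with no boundary term at all; the price is a factor $\frac12$ and the error $CR^{-2}\int_{B(0,R)\setminus B(0,R/2)}|\psi|^2\,\md x=O(R)$, which is negligible against the competing powers $R^{3/2}$ and $R^{3-\alpha}$. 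You instead integrate the Bogomolny identity for $\psi$ itself over $B(0,R)$ and must therefore control the flux of the horizontal supercurrent through $\partial B(0,R)$; your coarea/good-radius selection (radii $R_n$ with $R_n\int_{\partial B(0,R_n)}|(\nabla-iA)\psi|^2\,\md S\to0$, giving a boundary term $o(R_n^{1/2})$) does this correctly, and buys you the full constant $1$ in front of $\int B_3|\psi|^2\,\md x$ at the cost of needing enough regularity to integrate by parts and of arguing along a subsequence of radii. The underlying mechanism is nevertheless the same: the paper's Lemma~\ref{lem:sh-op} is itself derived from the commutator identity $[D_1,D_2]=-iB_3$, i.e.\ from the same Bogomolny computation, applied to the compactly supported function $\chi_R\psi$ so that the divergence term vanishes. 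The remaining steps of your proof --- $|\psi|\leq1$ via the subharmonicity of $(|\psi|^2-1)_+$ (this is Yang's argument, which the paper invokes through Lemma~\ref{lem:GL}), the splitting $B_3=H_3+(B_3-H_3)$ with Cauchy--Schwarz against the finite field energy giving $O(R^{3/2})$, the replacement $|\psi|^2=1-(1-|\psi|^2)$ with Cauchy--Schwarz against $\|1-|\psi|^2\|_{L^2}$ giving $O(R^{3/2-\alpha})$ (valid since $2-2\alpha>-1$), and the final comparison $R^{3-\alpha}\gg R^{3/2}$ --- coincide with the paper's. One small point: with your conventions $J_k=\mathrm{Im}(\bar\psi D_k\psi)$ and $B_3=\partial_1A_2-\partial_2A_1$, the displayed slice identity holds with $|D_1\psi+iD_2\psi|^2$ rather than $|D_1\psi-iD_2\psi|^2$; since both self-dual combinations are available and you need the sign that places $+B_3|\psi|^2$ on the right, this is only a matter of convention and does not affect the proof.
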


\begin{rem}
The Remark~\ref{rem:alternative} carries over to three dimensions, but with any 
component $H_j$ in place of $H$.
\end{rem}

The proof of Theorem~\ref{cor:GL-sys3} is exactly the same as that of 
Theorem~\ref{cor:GL-sys}. So,
we will give details only for the proof of Theorem~\ref{cor:GL-sys}. The
essential key for proving Theorem~\ref{cor:GL-sys} is a result from the
spectral theory of magnetic Schr\"odinger operators stated in Lemma~\ref{lem:sh-op}
below.

\section{Two auxiliary lemmas}

We start with the following  observation concerning the Ginzburg-Landau
system \eqref{GL-sys}.

\begin{lem}\label{lem:GL}
Assume that $H\in L_{\rm loc}^2(\R^2)$. Let $(\psi,A)$ be a weak
solution of \eqref{GL-sys} such that $\mathcal G(\psi,A)<\infty$.
Then $|\psi|\leq 1$ in $\R^2$.
\end{lem}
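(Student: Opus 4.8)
\emph{Proof sketch.} The idea is to turn the first equation in \eqref{GL-sys} into a pointwise differential inequality for the density $\rho:=|\psi|^2$ and then run a global maximum-principle argument, the finite-energy assumption entering only at the very end to exclude a non-trivial constant. Before any pointwise computation I would upgrade the regularity of the solution: since $d=2$, the embedding $H^1_{\rm loc}(\R^2)\hookrightarrow L^q_{\rm loc}(\R^2)$ for every $q<\infty$ makes the cubic nonlinearity $(1-|\psi|^2)\psi$ as well as the magnetic terms $A\cdot\nabla\psi$, $({\rm div}\,A)\psi$ and $|A|^2\psi$ all subcritical, so an elliptic bootstrap on $-\Delta\psi=(1-|\psi|^2)\psi-2iA\cdot\nabla\psi-i({\rm div}\,A)\psi-|A|^2\psi$ yields $\psi\in H^2_{\rm loc}(\R^2)\cap C^0(\R^2)$; in particular $\nabla\rho=2\,{\rm Re}(\overline\psi\,\nabla\psi)\in L^2_{\rm loc}(\R^2)$, which legitimates the identity and the integrations by parts below.

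Next I would multiply the first equation by $\overline\psi$, take real parts, and use the Kato-type identity ${\rm Re}\bigl(\overline\psi\,(\nabla-iA)^2\psi\bigr)=\tfrac12\Delta\rho-|(\nabla-iA)\psi|^2$ to obtain
\[
-\tfrac12\Delta\rho+|(\nabla-iA)\psi|^2=(1-\rho)\,\rho\qquad\text{in }\R^2,
\]
and in particular $-\tfrac12\Delta\rho\le(1-\rho)\rho$. Then I would test this against $\chi_R^2\,w$, where $w:=(\rho-1)_+$ (so $w\in H^1_{\rm loc}\cap C^0$, $\nabla w=\mathbf{1}_{\{\rho>1\}}\nabla\rho$, and $w\,\nabla\rho=\tfrac12\nabla(w^2)$) and $\chi_R$ is a smooth cutoff with $\chi_R\equiv1$ on $B(0,R)$, ${\rm supp}\,\chi_R\subset B(0,2R)$ and $|\nabla\chi_R|+R\,|\Delta\chi_R|\le C/R$. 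After integrating by parts, the terms produced by $\nabla\chi_R$ combine into $\tfrac12\int\nabla(w^2)\cdot\nabla(\chi_R^2)$, which is $O(R^{-2})\,\|w\|_{L^2(\R^2)}^2$, so
\[
\tfrac12\int_{\R^2}\chi_R^2\,|\nabla w|^2+\int_{\R^2}|(\nabla-iA)\psi|^2\,\chi_R^2\,w=\int_{\R^2}(1-\rho)\,\rho\,\chi_R^2\,w+o(1).
\]
The right-hand integrand is $\le0$ (it vanishes on $\{\rho\le1\}$ and equals $-(\rho-1)^2\rho<0$ on $\{\rho>1\}$), while both terms on the left are $\ge0$; letting $R\to\infty$ and using monotone convergence forces $\nabla w\equiv0$, hence $w$ is constant. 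Finally $w\in L^2(\R^2)$, because $w^2\le(1-\rho)^2$ and $\int_{\R^2}(1-\rho)^2\le2\,\mathcal G(\psi,A)<\infty$, so that constant must be $0$, i.e.\ $\rho\le1$ and $|\psi|\le1$.

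The only genuine difficulty is technical: making the test-function computation rigorous for a solution that a priori lies merely in $H^1_{\rm loc}$. This is handled by the bootstrap above, which provides $\nabla\rho\in L^2_{\rm loc}$, the differential identity in $L^1_{\rm loc}$, and the continuity of $\rho$ (so that $\chi_R^2 w$ is a bounded, compactly supported $H^1$ function and $\int|(\nabla-iA)\psi|^2\chi_R^2 w<\infty$), together with the cutoff $\chi_R$, which restores compact support and lets the finite-energy bound be invoked. If one prefers to avoid regularity theory, one can instead work with the inequality in $\mathcal D'(\R^2)$ and additionally truncate $\rho$ at level $1+k$, sending $R\to\infty$ and then $k\to\infty$; this adds bookkeeping but no new idea. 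In every version the argument rests on the sign of $(1-\rho)\,\rho\,(\rho-1)_+$ and on the $L^2(\R^2)$ integrability of $(\rho-1)_+$ provided by $\mathcal G(\psi,A)<\infty$.
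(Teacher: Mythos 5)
Your argument is correct, but it is worth noting how it relates to what the paper actually does: the paper gives no proof at all, it simply cites Yang's Lemma~3.1 (the $\R^3$ case) and observes that the hypothesis $H\in L^2$ is never used there, only finiteness of the energy, so the proof transfers verbatim to $\R^2$. What you have written is essentially a self-contained reconstruction of that argument: bootstrap to $\psi\in H^2_{\rm loc}\cap C^0$ (the subcritical bootstrap in $d=2$ works as you describe), the diamagnetic/Kato identity ${\rm Re}\bigl(\overline\psi\,(\nabla-iA)^2\psi\bigr)=\tfrac12\Delta\rho-|(\nabla-iA)\psi|^2$, testing $-\tfrac12\Delta\rho+|(\nabla-iA)\psi|^2=(1-\rho)\rho$ against $\chi_R^2(\rho-1)_+$, and using $(\rho-1)_+\in L^2(\R^2)$ (which follows from $\int(1-|\psi|^2)^2\leq 2\,\mathcal G(\psi,A)<\infty$) both to kill the cutoff error $-\tfrac14\int(\rho-1)_+^2\,\Delta(\chi_R^2)=O(R^{-2})$ and to exclude a nonzero constant after $\nabla(\rho-1)_+\equiv 0$. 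The sign structure is right ($(1-\rho)\rho(\rho-1)_+\leq 0$), the integrations by parts are legitimate at the regularity you established, and your version has the merit of making explicit exactly where finite energy enters, which is precisely the point the paper makes when justifying the citation; note also that you could conclude even more directly from $\int\rho(\rho-1)_+^2\chi_R^2\,\md x\to 0$ without passing through the constancy of $(\rho-1)_+$. The only blemishes are cosmetic: the combined cutoff term is $\tfrac14\int\nabla\bigl((\rho-1)_+^2\bigr)\cdot\nabla(\chi_R^2)\,\md x$ rather than $\tfrac12$ of it, and in the second bootstrap step the term $({\rm div}\,A)\psi$ only reaches $L^2_{\rm loc}$ (so you land in $H^2_{\rm loc}$, not better), but neither affects the argument.
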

\begin{proof}
This result was proved by Yang \cite[Lemma~3.1]{Y} for
$\mathbb{R}^3$ under the assumption $H\in L^2(\R^3)$. The assumption
on $H$ is not used in Yang's proof but the proof only relies on  the
fact that the energy of $(\psi,A)$ is finite. The proof of this
lemma is line-by-line the same as \cite{Y}, but with $\mathbb{R}^2$
in place of $\mathbb{R}^3$.
\end{proof}

A key-ingredient is the following result from the
spectral theory of magnetic Schr\"odinger operators.
%For a given magnetic field $\mathcal B\in
%C^1(\mathbb R^2)$, we associate the magnetic potential,
%\begin{equation}\label{eq:mp-c}
%\mathcal A(x)=\frac12\left(-\int_0^1s\mathcal B(sx)x_2\,\md
%s\,,\,\int_0^1s\mathcal B(sx)x_1\,\md
%s\right)\quad\forall~x=(x_1,x_2)\in\mathbb R^2\,.\end{equation}

%\begin{lem}\label{lem:sh-op}(Helffer-Morame \cite{HM})\ \\
%There exist constants $h_0\in(0,1]$ and $C>0$ such that, for all
%$$\rho>0\,,\quad\theta>0\,,\quad\mathcal B\in C^1(\mathbb R^2)\,,
%\quad\psi\in H^1(B(0,1);\mathbb C)$$  and $h\in(0,h_0)$, the
%following inequality holds,
%\begin{eqnarray*}
%&&\hskip-1cm\int_{B(0,1)}|(h\nabla-i\mathcal A)\psi|^2\geq
%\int_{B(0,1)}\bigg{(}h\Theta_0 |\mathcal B(x)|-Ch^{2-2\rho}\\
%&&-C\left(\|\nabla \mathcal
%B\|^2_{L^\infty(B(0,1))}h^{4\rho-2\theta}+\|\mathcal B\|_{L^\infty(
%B(0,1))}h^{\min(\rho+1,2\theta+1)}\right)\bigg{)}|\psi(x)|^2\,\md
%x\,.\end{eqnarray*} Here $\Theta_0\in(0,1)$ is a universal constant.
%\end{lem}

Let $\chi$ be a cut-off function such that $0\leq \chi\leq 1$,
$\chi=1$ in $[0,\frac12]$ and $\chi=0$ in $[1,\infty)$. For all
$R>0$, we introduce the function,
\begin{equation}\label{eq-chi-R}
\chi_R(x)=\chi\left(\frac{|x|}{R}\right)\quad\forall~x\in\mathbb
R^2\,.
\end{equation}

\begin{lem}\label{lem:sh-op}
There exists a constant $C>0$ such that, for all $\psi\in
H^1(\mathbb R^2;\mathbb C)$, $A\in H^1_{\rm loc}(\R^2;\R^2)$ and
$R>0$, the following inequality holds,
\begin{equation*}
\int_{B(0,R)}|(\nabla-i A)\psi|^2\,\md x\geq
\frac12\int_{B(0,R)}B(x)|\chi_R\psi|^2\,\md
x-\frac{C}{R^2}\int_{B(0,R)\setminus B(0,R/2)}|\psi(x)|^2\,\md x\,.
\end{equation*}
Here $B={\rm curl}\,A$ and $\chi_R$ the function from
\eqref{eq-chi-R}.
\end{lem}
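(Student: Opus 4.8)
The plan is to pass from $\psi$ to the compactly supported function $\chi_R\psi$ and to exploit the diamagnetic-type identity that produces the magnetic field $B$ as a pointwise potential. First I would record the Leibniz rule $(\nabla - iA)(\chi_R\psi) = \chi_R(\nabla-iA)\psi + (\nabla\chi_R)\psi$, which upon taking $L^2$ norms and using $(a+b)^2 \le 2a^2+2b^2$ (or, more carefully, the bound $\int|\chi_R(\nabla-iA)\psi|^2 \ge \int|(\nabla-iA)(\chi_R\psi)|^2 - 2\,{\rm Re}\int \overline{(\nabla-iA)(\chi_R\psi)}(\nabla\chi_R)\psi$ together with Cauchy--Schwarz and an absorption) gives
\begin{equation*}
\int_{B(0,R)}|(\nabla-iA)\psi|^2\,\md x \ge (1-\varepsilon)\int_{\R^2}|(\nabla-iA)(\chi_R\psi)|^2\,\md x - \frac{C_\varepsilon}{R^2}\int_{B(0,R)\setminus B(0,R/2)}|\psi|^2\,\md x,
\end{equation*}
since $|\nabla\chi_R|\le C/R$ and $\nabla\chi_R$ is supported in the annulus $B(0,R)\setminus B(0,R/2)$. (One must be slightly careful that $A$ is only in $H^1_{\rm loc}$, but on the bounded set $B(0,R)$ this is harmless, and $\chi_R\psi \in H^1$ with compact support.)

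The second step is the key algebraic identity. For $u = \chi_R\psi \in H^1_0(B(0,R))$ one expands $|(\nabla-iA)u|^2 = |(\partial_1 - iA_1)u|^2 + |(\partial_2 - iA_2)u|^2$ and writes, with $P_j = -i\partial_j - A_j$,
\begin{equation*}
\int |(\nabla-iA)u|^2 = \int |(P_1 \pm iP_2)u|^2 \pm \int \overline{u}\,[P_1,P_2]u \cdot(\text{sign}),
\end{equation*}
and since the commutator $[P_1,P_2] = i(\partial_1 A_2 - \partial_2 A_1) = iB$ (as quadratic forms, valid for $u$ with compact support and $A\in H^1_{\rm loc}$), choosing the sign so that the cross term is $+\int B|u|^2$ yields
\begin{equation*}
\int_{\R^2}|(\nabla-iA)u|^2\,\md x \ge \int_{\R^2} B(x)\,|u(x)|^2\,\md x,
\end{equation*}
because $\int|(P_1\pm iP_2)u|^2 \ge 0$. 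Applied with $u = \chi_R\psi$ (supported in $B(0,R)$), this gives $\int_{\R^2}|(\nabla-iA)(\chi_R\psi)|^2 \ge \int_{B(0,R)} B|\chi_R\psi|^2$.

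Combining the two steps, $\int_{B(0,R)}|(\nabla-iA)\psi|^2 \ge (1-\varepsilon)\int_{B(0,R)} B|\chi_R\psi|^2 - \frac{C_\varepsilon}{R^2}\int_{B(0,R)\setminus B(0,R/2)}|\psi|^2$; taking $\varepsilon = \tfrac12$ gives the stated inequality. I expect the main obstacle to be the rigorous justification of the commutator identity $[P_1,P_2]=iB$ at the level of quadratic forms under the sole hypothesis $A\in H^1_{\rm loc}$: one cannot integrate by parts naively with $A$ merely in $H^1$, so I would argue by approximating $A$ by smooth potentials $A^{(n)}\to A$ in $H^1(B(0,R))$, establishing the identity and the resulting inequality $\int|(\nabla - iA^{(n)})u|^2 \ge \int B^{(n)}|u|^2$ for smooth $A^{(n)}$ (where everything is classical), and then passing to the limit using $u\in H^1$ with compact support together with the continuity of $A \mapsto \int|(\nabla-iA)u|^2$ and $B\mapsto \int B|u|^2$ along this sequence. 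The other minor point to handle with care is that $B \in L^2_{\rm loc}$ only, so $\int_{B(0,R)} B|\chi_R\psi|^2$ must be interpreted via $|\chi_R\psi|^2 \in L^\infty$ with compact support when $\psi\in H^1 \cap L^\infty$ — but the lemma as stated does not assume $\psi$ bounded, so the approximation argument above is what makes the pairing $\int B|u|^2$ meaningful, using $u^2 \in W^{1,1}$ and a further density step if needed.
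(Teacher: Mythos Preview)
Your argument is correct and follows essentially the same route as the paper: localize via $\chi_R$ using the Leibniz rule and a Cauchy--Schwarz/absorption (the paper writes this directly as $|\chi_R(\nabla-iA)\psi|^2\ge\frac12|(\nabla-iA)(\chi_R\psi)|^2-|\psi\nabla\chi_R|^2$, which is your $\varepsilon=\tfrac12$ case), then apply the pointwise-magnetic-field lower bound $\int|(\nabla-iA)\phi|^2\ge\pm\int B|\phi|^2$ for $\phi\in H^1_0(B(0,R))$. The only difference is that the paper simply cites this last inequality from \cite{CFKS} and \cite[Lemma~2.4.1]{FH} rather than rederiving it via the commutator $[P_1,P_2]=iB$, so your additional approximation/justification discussion is not needed in the paper but is a reasonable expansion of what lies behind that citation.
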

\begin{proof}
We write,
\begin{eqnarray*}
\int_{B(0,R)}|(\nabla-iA)\psi|^2\,\md x&\geq&
\int_{B(0,R)}|\chi_R(\nabla-iA)\psi|^2\,\md x\\
&\geq&\frac12\int_{B(0,R)}|(\nabla-iA)(\chi_R\psi)|^2\,\md
x-\int_{B(0,R)}|\psi\nabla\chi_R|^2\,\md x\,.
\end{eqnarray*}
To finish the proof, we just use the following well known inequality
(see \cite{CFKS} or \cite[Lemma~2.4.1]{FH}),
\begin{equation*}
\int_{B(0,R)}|(\nabla-iA)\phi|^2\,\md x\geq
\pm\int_{B(0,R)}B(x)|\phi|^2\,\md x\,,\quad\forall~\phi\in
H^1_0(B(0,R))\,.
\end{equation*}
\end{proof}

\section{Proof of Theorem~\ref{cor:GL-sys}}
Assume that $(\psi,A)$ is a finite energy solution of
\eqref{GL-sys}. Thanks to Lemma~\ref{lem:GL}  we have $|\psi|\leq1$
in $\R^2$.

Recalling the hypothesis on the applied magnetic field $H$ that we assumed in
Theorem~\ref{cor:GL-sys}, we may pick $R_0>0$ such that
\begin{equation}\label{eq:6}
H(x)\geq \frac{h}{|x|^\alpha}\quad\forall~|x|\geq R_0\,.
\end{equation}
Applying Lemma~\ref{lem:sh-op}, with $(\psi,A)$ as above, a solution
of (\ref{GL-sys}), we obtain with $B={\rm curl}\,A$,
\begin{equation*}
\int_{\mathbb
R^2}|(\nabla-iA)\psi|^2\,\md x\geq
\frac12\int_{B(0,R/2)}B(x)|\psi|^2\,\md x-\frac{C}{R^2}
\int_{B(0,R)\setminus B(0,R/2)}|\psi|^2\,\md x\,.
\end{equation*}
Let $R>2R_0$ and $\Omega_R=\{x\in\R^2~:~R_0<|x|<R\}$. Then we may
write,
\begin{eqnarray*}
\int_{\mathbb R^2}|(\nabla-iA)\psi|^2\,\md x&\geq&
\frac12\int_{\Omega_R}B(x)\,|\chi_R\psi|^2\,\md
x\\&&+\frac{1}{2}\int_{B(0,R_0)}B(x)|\chi_R\psi|^2\,\md
x-\frac{C}{R^2} \int_{B(0,R)\setminus B(0,R/2)}|\psi|^2\,\md x\,.
\end{eqnarray*}
Using  that $\int_{\mathbb R^2}|(\nabla-iA)\psi|^2\,\md x\leq
\mathcal{G}(\psi,A)$, $A\in H^1_{\rm loc}(\R^2)$ and
$|\chi_R\psi|\leq1$, we get a constant $C_0$ depending on $R_0$ such
that,
\begin{equation}\label{eq:s1'}
\mathcal{G}(\psi,A)\geq \frac12
\int_{\Omega_R}B(x)|\chi_R\psi|^2\,\md x-C_0\,.\end{equation} So,
let us handle the first term in the right hand side above. We write,
\begin{equation}\label{eq-ref1}
\int_{\Omega_R}B(x)|\chi_R\psi|^2\,\md x = \int_{\Omega_R}
H(x)|\chi_R\psi|^2\,\md x +\int_{\Omega_R}\bigl(B(x)-H(x)\bigr)|\chi_R\psi|^2
\,\md x\,.
\end{equation}
In order to handle the last term on the right of \eqref{eq-ref1}, we
apply  a Cauchy-Schwarz inequality and use the fact that
$|\chi_R\psi|\leq1$. In this way  we get,
\begin{equation*}
\left|\int_{\Omega_R}\bigl(B(x)-H(x)\bigr)|\chi_R\psi|^2 \,\md x\right|\leq
\left(\int_{\Omega_R}|B(x)-H(x)|^2\,\md
x\right)^{1/2}\left(\int_{\Omega_R}\md
x\right)^{1/2}\leq \bigl(\mathcal G(\psi,A)\bigr)^{1/2}|\Omega_R|\,.
\end{equation*} 
Implementing this bound
together with \eqref{eq:6} in the right side of \eqref{eq:s1'}, we
get the following lower bound,
\begin{equation}\label{eq:6'}
\int_{\Omega_R}B(x)|\chi_R\psi|^2\,\md x \geq
\int_{\Omega_R}\frac{h}{|x|^\alpha}|\chi_R\psi|^2\,\md
x-\left(\mathcal G(\psi,A)\right)^{1/2}|\Omega_R|^{1/2}\,.
\end{equation}
We need only to bound from below
$\int_{\Omega_R}\frac{h}{|x|^\alpha}|\chi_R\psi|^2\,\md x$.
Actually, using that $\chi_R=1$ in $B(0,R/2)$ and  a Cauchy-Schwarz
inequality, we obtain,
\begin{equation*}
\begin{aligned}
\int_{\Omega_R}\frac{h}{|x|^\alpha}|\chi_R\psi|^2\,\md x
&\geq\int_{\Omega_{R/2}} \frac{h}{|x|^\alpha}|\psi|^2\,\md
x\\
&=\int_{\Omega_{R/2}} \frac{h}{|x|^\alpha}\,\md x
+\int_{\Omega_{R/2}}\frac{h}{|x|^\alpha}(|\psi|^2-1)\,\md x\\
&\geq \frac{2\pi h}{2-\alpha} \bigl((R/2)^{2-\alpha}-R_0^{2-\alpha}\bigr)\\
&~
-\bigl(\mathcal G(\psi,A)\bigr)^{1/2}h\Bigl(\frac{2\pi}{2-2\alpha}\Bigr)^{1/2}\bigl((R/2)^{2-2\alpha}-R_0^{2-2\alpha}\bigr)^{1/2}\,.
\end{aligned}\end{equation*}

Now we use the assumption that $\mathcal G(\psi,A)<\infty$. In this way, we
get by implementing the right-hand side above in~\eqref{eq:6'} and
then by substituting the resulting lower bound into \eqref{eq:s1'},
a constant $C$ such that,
\begin{equation}\label{eq:7}
\mathcal{G}(\psi,A)
\geq  \frac{2^{\alpha-1}\pi h}{2-\alpha} R^{2-\alpha}-CR^{1-\alpha}-CR-C\,.
\end{equation}
Making $R\to\infty$  and recalling that
$\alpha<1$, we get a contradiction to the assumption that the energy
$\mathcal G(\psi,A)$ is finite, thereby finishing the proof of Theorem~\ref{cor:GL-sys}.

\section*{Acknowledgements}
The authors wish to thank the anonymous referee for valuable
suggestions. AK is supported by a Starting Independent Researcher
grant by the ERC under the FP7. MP is supported by the Lundbeck
Foundation.

\end{document}